%
%
%

\documentclass[graybox]{svmult}


\usepackage{type1cm}        
%
\usepackage{makeidx}         
\usepackage{graphicx}        
\usepackage{multicol}        
\usepackage[bottom]{footmisc}

\usepackage{newtxtext}       %
\usepackage[varvw]{newtxmath}       
\usepackage{bussproofs}
\usepackage{proof}


\makeindex             
                       
\newcommand{\LK}{{\bf LK}}
\newcommand{\LE}{{\bf L}\varepsilon}
\newcommand{\LKp}{{\bf LK}^{+}}
\newcommand{\LKpp}{{\bf LK}^{++}}                       


\begin{document}

\title*{Epsilon Calculus Provides Shorter Cut-Free Proofs}
\author{Matthias Baaz\orcidID{0000-0002-7815-2501} and\\ Anela Loli{\'c}\orcidID{0000-0002-4753-7302}}
\institute{Matthias Baaz \at Institute of Discrete Mathematics and Geometry, TU Wien, \email{baaz@logic.at}
\and Anela Loli{\'c} \at Kurt G\"odel Society, Institute of Logic and Computation, TU Wien \email{anela@logic.at}}
%
%
\maketitle

\abstract{In this paper we show that cut-free derivations in the epsilon format of sequent calculus provide for a non-elementary speed-up w.r.t. cut-free proofs in usual sequent calculi in first-order language.}

\section{Introduction}
Epsilon calculus gives the impression to provide shorter proofs than other proof mechanisms. To make this claim precise, we compare in this paper an epsilon calculus variant of $\LK$ with $\LK$ and related calculi. The main property of epsilon calculus used is its ability to overbind bound variables.

\section{Epsilon Calculus}
The $\varepsilon$-calculus uses $\varepsilon$-terms to represent $\exists x A(x)$ by $A(\varepsilon_x A(x))$. Consequently, $\forall x A(x)$ is represented by $A(\varepsilon_x \neg A(x))$. 
As the $\varepsilon$-calculus is only based on the representation by critical formulas $$A(t) \to A(\varepsilon_x A(x))$$
for $A(t) \to \exists x A(x)$ and propositional axioms and rules, the unrestricted deduction theorem of propositional calculus transfers to this formalization of first-order logic: 
The $\varepsilon$-proof itself is a tautology 
$$(\bigwedge_{i=1}^{n} A_i (t_i ) \to A_i (\varepsilon_x A_i (x))) \to E,$$
where $E$ is the original result translated into $\varepsilon$-calculus. 
Note that strong quantifier inferences are replaced by substitutions of $\varepsilon_x \neg A(x)$ for $\forall x A(x)$ positive and $\varepsilon_x A(x)$ for $\exists x A(x)$ negative. (Valid propositional formulas do not influence an $\varepsilon$-proof.)
The extended first $\varepsilon$-theorem \cite{hilbertbernays39,moser2006epsilon} eliminates algorithmically the critical formulas obtaining a Herbrand disjunction $\bigvee_{i=1}^m E(\overline{t_i})$, where $E$ is the $\varepsilon$-translation of $\exists \overline{x} E'(\overline{x})$, $E'$ being quantifier-free.
The argument can be easily extended to formulas $E'$ which contain only weak quantifiers.

The language of epsilon calculus is based on the term language of epsilon expressions and other function symbols and on propositional language otherwise.

\section{$\LE$, $\LK$, and Related Sequent Calculi} \label{sec.3}
To compare cut-free derivations we consider a sequent calculus format of the epsilon calculus.
\begin{definition}[$\LE$]
(In the language of epsilon calculus) \\[1ex]
Axiom schema: $A \vdash A$, $A$ atomic.\\[1ex]
The inference rules are:\\[1ex]
\begin{itemize}
	 \item for conjunction

	\medskip

	\begin{minipage}{0.4\linewidth}
	\centering
	\begin{prooftree}
	\AxiomC{$A, B, \Gamma \vdash \Delta$}
	\RightLabel{$\land_{l}$}
	\UnaryInfC{$A \land B, \Gamma \vdash \Delta$}
	\end{prooftree}
	\end{minipage}
	\begin{minipage}{0.5\linewidth}
	\centering
	\begin{prooftree}
	\AxiomC{$\Gamma_1 \vdash \Delta_1, A$}
	\AxiomC{$\Gamma_2 \vdash \Delta_2, B$}
	\RightLabel{$\land_{r}$}
	\BinaryInfC{$\Gamma_1 , \Gamma_2 \vdash \Delta_1 , \Delta_2 , A \land B$}
	\end{prooftree}
	\end{minipage}
	
	 \item for disjunction

	\medskip
	
	\begin{minipage}{0.5\linewidth}
	\centering
	\begin{prooftree}
	\AxiomC{$A, \Gamma_1 \vdash \Delta_1$}
	\AxiomC{$B, \Gamma_2 \vdash \Delta_2$}
	\RightLabel{$\lor_{l}$}
	\BinaryInfC{$A \lor B, \Gamma_1, \Gamma_2 \vdash \Delta_1 , \Delta_2$}
	\end{prooftree}
	\end{minipage}
	\begin{minipage}{0.4\linewidth}
	\centering
	\begin{prooftree}
	\AxiomC{$\Gamma \vdash \Delta, A, B$}
	\RightLabel{$\lor_{r}$}
	\UnaryInfC{$\Gamma \vdash \Delta, A \lor B$}
	\end{prooftree}
	\end{minipage}
	
	\item for implication

	\medskip

	\begin{minipage}{0.5\linewidth}
	\centering
	\begin{prooftree}
	\AxiomC{$\Gamma_1 \vdash \Delta_1, A$}
	\AxiomC{$B, \Gamma_2 \vdash \Delta_2$}
	\RightLabel{$\to_{l}$}
	\BinaryInfC{$A \to B, \Gamma_1 , \Gamma_2 \vdash \Delta_1 , \Delta_2$}
	\end{prooftree}
	\end{minipage}
	\begin{minipage}{0.4\linewidth}
	\centering
	\begin{prooftree}
	\AxiomC{$A, \Gamma \vdash \Delta, B$}
	\RightLabel{$\to_{r}$}
	\UnaryInfC{$\Gamma \vdash \Delta, A \to B$}
	\end{prooftree}
	\end{minipage}
	
	\item for negation

	\medskip

    \begin{minipage}{0.4\linewidth}
	\centering
	\begin{prooftree}
	\AxiomC{$\Gamma \vdash \Delta, A$}
	\RightLabel{$\neg_{l}$}
	\UnaryInfC{$\neg A, \Gamma \vdash \Delta$}
	\end{prooftree}
    \end{minipage}
	\begin{minipage}{0.4\linewidth}
	\centering
	\begin{prooftree}
	\AxiomC{$A, \Gamma \vdash \Delta$}
	\RightLabel{$\neg_{r}$}
	\UnaryInfC{$\Gamma \vdash \Delta, \neg A$}
	\end{prooftree}
	\end{minipage}
	
	\item weakening

	\begin{minipage}{0.4\linewidth}
	\centering
	\begin{prooftree}
	\AxiomC{$\Gamma \vdash \Delta$}
	\RightLabel{$w_{l}$}
	\UnaryInfC{$A, \Gamma \vdash \Delta$}
	\end{prooftree}
	\end{minipage}
	\begin{minipage}{0.4\linewidth}
	\centering
	\begin{prooftree}
	\AxiomC{$\Gamma \vdash \Delta$}
	\RightLabel{$w_{r}$}
	\UnaryInfC{$\Gamma \vdash \Delta, A$}
	\end{prooftree}
	\end{minipage}
	
	\item contraction

	\begin{minipage}{0.4\linewidth}
	\centering
	\begin{prooftree}
	\AxiomC{$A, A, \Gamma \vdash \Delta$}
	\RightLabel{$c_{l}$}
	\UnaryInfC{$A, \Gamma \vdash \Delta$}
	\end{prooftree}
	\end{minipage}
	\begin{minipage}{0.4\linewidth}
	\centering
	\begin{prooftree}
	\AxiomC{$\Gamma \vdash \Delta, A, A$}
	\RightLabel{$c_{r}$}
	\UnaryInfC{$\Gamma \vdash \Delta, A$}
	\end{prooftree}
	\end{minipage}

	\item cut 

\begin{minipage}{0.6\linewidth}
	\centering
	\begin{prooftree}
	\AxiomC{$\Gamma_1 \vdash \Delta_1, A$}
	\AxiomC{$A, \Gamma_2 \vdash \Delta_2$}
	\RightLabel{$cut$}
	\BinaryInfC{$\Gamma_1 , \Gamma_2 \vdash \Delta_1 , \Delta_2$}
\end{prooftree}
\end{minipage}

	\item quantifier inferences:
	
	the weak quantifier inferences $\exists_r$
\begin{prooftree}
	\AxiomC{$\Pi \vdash \Delta, A(t)$}
	\RightLabel{$\exists_r$}
	\UnaryInfC{$\Pi \vdash \Delta, A(\varepsilon_x A(x))$}
\end{prooftree}

	and $\forall_l$
\begin{prooftree}
	\AxiomC{$A(t), \Pi \vdash \Delta$}
	\RightLabel{$\forall_l$}
	\UnaryInfC{$A(\varepsilon_x \neg A(x)), \Pi \vdash \Delta$}
\end{prooftree}
the strong quantifier inferences $\exists_l$: replaced by substitution\\[1ex]
and $\forall_r$: replaced by substitution.
\end{itemize}
\end{definition}
We have to define first a translation of an expression in first-order language to an expression in epsilon calculus language.
\begin{definition}
Let $A$ be a formula. Its epsilon translation is denoted as $[A]^\varepsilon$ and inductively defined as
\begin{itemize}
	\item $A$ is an atom. Then $[A]^\varepsilon = A$.
	\item $A = B \circ C$, where $\circ \in \{\land, \lor, \to\}$ and $B$ and $C$ formulas. Then $[A]^\varepsilon = [B]^\varepsilon \circ [C]^\varepsilon$.
	\item $A = \exists x A'(x)$. Then $[A]^\varepsilon = [A'(\varepsilon_x A'(x))]^\varepsilon$.
	\item $A = \forall x A'(x)$. Then $[A]^\varepsilon = [A'(\varepsilon_x \neg A'(x))]^\varepsilon$.
\end{itemize}
\end{definition}
$[A]^{\forall \exists}$ is a translation form epsilon calculus language to first-order language when $A = [B]^\varepsilon$ for some expression $B$, and undefined otherwise. 
\begin{example}
Note that $[A]^{\forall \exists}$ for an epsilon calculus expression $A$ does not always exist: let $A$ be $\varepsilon_v (v = \varepsilon_x \neg x = x) = \varepsilon_x \neg x = x$.
\end{example}
\begin{proposition} \label{prop.1}
Every $\LK$-derivation possibly with cuts can be translated into an $\LE$-derivation of equal or smaller length.
\end{proposition}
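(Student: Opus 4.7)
The plan is to proceed by induction on the structure of the $\LK$-derivation $\pi$, producing an $\LE$-derivation $\pi^{*}$ whose sequents are the $[\cdot]^\varepsilon$-translations of those of $\pi$, with every eigenvariable of $\pi$ replaced by the $\varepsilon$-term that its strong quantifier inference introduces. Before starting the induction I would rename eigenvariables so that they are pairwise distinct and disjoint from the free variables of the end-sequent; this standard eigenvariable-disjointness assumption guarantees that the substitutions used below do not interfere with one another.

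First I would dispatch the easy cases. Axioms $A \vdash A$ (taken to be atomic, as in $\LE$) translate unchanged. Propositional inferences, structural rules (weakening, contraction) and cut all commute with $[\cdot]^\varepsilon$, since the translation is defined recursively through the propositional connectives; each such $\LK$-step therefore maps step-for-step to the same-named $\LE$-step on the translated sequents, preserving the length. For the weak quantifier rules, an $\LK$-inference $\exists_r$ passing from $\Pi \vdash \Delta, A(t)$ to $\Pi \vdash \Delta, \exists x A(x)$ translates to a passage from $[\Pi]^\varepsilon \vdash [\Delta]^\varepsilon, [A(t)]^\varepsilon$ to $[\Pi]^\varepsilon \vdash [\Delta]^\varepsilon, [A]^\varepsilon(\varepsilon_x [A]^\varepsilon(x))$, which is exactly the $\LE$-rule $\exists_r$; the case $\forall_l$ is symmetric.

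The key step is the treatment of strong quantifier inferences. Consider an $\exists_l$ with eigenvariable $\alpha$ and principal formula $\exists x A(x)$, passing from $A(\alpha), \Gamma \vdash \Delta$ to $\exists x A(x), \Gamma \vdash \Delta$. By the induction hypothesis I have an $\LE$-derivation of $[A(\alpha)]^\varepsilon, [\Gamma]^\varepsilon \vdash [\Delta]^\varepsilon$. I then apply the substitution $\alpha \mapsto \varepsilon_x [A]^\varepsilon(x)$ throughout this subproof. Since every $\LE$-rule is closed under substitution of a first-order variable by a term, the result is again an $\LE$-derivation; its end-sequent becomes $[A]^\varepsilon(\varepsilon_x [A]^\varepsilon(x)), [\Gamma]^\varepsilon \vdash [\Delta]^\varepsilon$, which by the definition of $[\cdot]^\varepsilon$ equals $[\exists x A(x)]^\varepsilon, [\Gamma]^\varepsilon \vdash [\Delta]^\varepsilon$. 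The $\exists_l$-inference is thus absorbed into a substitution and vanishes, strictly decreasing the length by one. The case of $\forall_r$ is dual, with substitution $\alpha \mapsto \varepsilon_x \neg [A]^\varepsilon(x)$.

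The main obstacle will be verifying coherence of the substitutions across the whole proof. Different strong quantifier inferences may contribute substitutions that are composed along a branch, and it is the eigenvariable condition, together with the preliminary renaming, that ensures these compositions are well-defined and never accidentally rewrite unrelated formulas occurring in sibling subproofs or below the corresponding inference. Once this bookkeeping is checked, a length count closes the argument: the translation preserves propositional, structural, weak-quantifier and cut steps one-to-one, while removing each strong quantifier step, so the resulting $\LE$-derivation has length equal to that of $\pi$ minus the number of strong quantifier inferences in $\pi$, hence at most that of $\pi$.
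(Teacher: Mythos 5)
Your proposal is correct and follows essentially the same route as the paper's own (much terser) proof: translate every inference step to its $\LE$ counterpart and absorb each strong quantifier inference into a substitution of the corresponding $\varepsilon$-term for the eigenvariable, which can only shorten the derivation. The additional bookkeeping you describe (eigenvariable renaming and coherence of the substitutions) is exactly the detail the paper leaves implicit.
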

\begin{proof}
All inference steps are replaced by corresponding inference steps with exception of strong quantifier rules, which are replaced by substitution.
\end{proof}
\begin{remark}
Note that the usual form of epsilon proofs can be obtained by deleting the quantifier inferences of $\LE$, and replacing them by 
	\begin{prooftree}
		\AxiomC{$(\psi')$}
		\noLine
		\UnaryInfC{$\Pi' \vdash \Delta', A'(t)$}
		\AxiomC{$A'(\varepsilon_x A(x)) \vdash A'(\varepsilon_x A(x))$}
		\RightLabel{$\exists_r$}
		\BinaryInfC{$A'(t) \to A'(\varepsilon_x A(x)), \Pi' \vdash \Delta'$}
	\end{prooftree}
and
	\begin{prooftree}
		\AxiomC{$(\psi')$}
		\noLine
		\UnaryInfC{$A'(t), \Pi' \vdash \Delta'$}
		\AxiomC{$A'(\varepsilon_x \neg A(x)) \vdash A'(\varepsilon_x \neg A(x))$}
		\RightLabel{$\forall_l$}
		\BinaryInfC{$A'(\varepsilon_x \neg A(x)) \to A'(t), \Pi' \vdash \Delta'$}
		\doubleLine
		\UnaryInfC{$\neg A'(t) \to \neg A'(\varepsilon_x \neg A(x)), \Pi' \vdash \Delta'$}
	\end{prooftree}

\end{remark}
Recall that a function on the natural numbers is elementary if it can be defined by a quantifier-free formula from $+$, $\times$, and the function $x \to 2^x$. By independent results of R. Statman \cite{statman1979lower} and of V. P. Orevkov \cite{orevkov1982lower}, the sizes of the smallest cut-free {\LK}-proofs of sequents of length $n$ are not bounded by any elementary function on $n$.
\begin{example}
A shortest cut-free $\LK$-derivation of $\exists y (A(y) \to \forall x A(x))$ is

\begin{prooftree}
	\AxiomC{$A(a) \vdash A(a)$}
	\RightLabel{$w_r$}
	\UnaryInfC{$A(a) \vdash A(a), \forall x A(x)$}
	\RightLabel{$\to_r$}
	\UnaryInfC{$\vdash A(a), A(a) \to \forall x A(x)$}
	\RightLabel{$\exists_r$}
	\UnaryInfC{$\vdash A(a), \exists y (A(y) \to \forall x A(x))$}
	\RightLabel{$\forall_r$}
	\UnaryInfC{$\vdash \forall x A(x), \exists y (A(y) \to \forall x A(x))$}
	\RightLabel{$w_l$}
	\UnaryInfC{$A(b) \vdash \forall x A(x), \exists y (A(y) \to \forall x A(x))$}
	\RightLabel{$\to_r$}
	\UnaryInfC{$\vdash A(b) \to \forall x A(x), \exists y (A(y) \to \forall x A(x))$}
	\RightLabel{$\exists_r$}
	\UnaryInfC{$\vdash \exists y (A(y) \to \forall x A(x)), \exists y (A(y) \to \forall x A(x))$}
	\RightLabel{$c_r$}
	\UnaryInfC{$\vdash \exists y (A(y) \to \forall x A(x))$}
\end{prooftree}
Its translation to $\LE$ is 

\begin{prooftree}
	\AxiomC{$A(e) \vdash A(e)$}
	\RightLabel{$w_r$}
	\UnaryInfC{$A(e) \vdash A(e), A(e)$}
	\RightLabel{$\to_r$}
	\UnaryInfC{$\vdash A(e), A(e) \to A(e)$}
	\RightLabel{$\exists_r$}
	\UnaryInfC{$\vdash A(e), A(f) \to A(e)$}
	\RightLabel{$(*) \ + w_l$}
	\UnaryInfC{$A(b) \vdash A(e), A(f) \to A(e)$}
	\RightLabel{$\to_r$}
	\UnaryInfC{$\vdash A(b) \to A(e), A(f) \to A(e)$}
	\RightLabel{$\exists_r$}
	\UnaryInfC{$\vdash A(f) \to A(e), A(f) \to A(e)$}
	\RightLabel{$c_r$}
	\UnaryInfC{$\vdash A(f) \to A(e) \ (= \exists y (A(y) \to \forall x A(x))^\varepsilon)$}
\end{prooftree}
where $e \equiv \varepsilon_x \neg A(x)$ and $f \equiv \varepsilon_y (A(y) \to A(\varepsilon_x \neg A(x)))$.\\ [1ex]
$(*)$: $\forall_r$ has been replaced by the substitution of $\varepsilon_x \neg A(x)$ for $a$.\\ [1ex]
The shortest cut-free derivation of $\vdash A(f) \to A(e)$ in $\LE$ is however

\begin{prooftree}
	\AxiomC{$A(e) \vdash A(e)$}
	\RightLabel{$\to_l$}
	\UnaryInfC{$\vdash A(e) \to A(e)$}
	\RightLabel{$\exists_r$}
	\UnaryInfC{$\vdash A(f) \to A(e)$}
\end{prooftree}
\end{example}
\begin{theorem}[\cite{orevkov1982lower,statman1979lower}] \label{th.orevkov}
There is a specific family of sequents $\{S_i\}_{i<\omega}$ described in \cite{DBLP:journals/fuin/BaazL94} and due to Statman \cite{statman1979lower}, and specific {\LK}-proofs thereof, such that they have the following properties:
\begin{enumerate}
	\item the size of $S_i$ is polynomial in $i$;
	\item there is no bound on the size of their smallest cut-free {\LK}-proofs that is elementary in $i$;
	\item the size of these proofs (with cuts), however, is polynomially bounded in $i$.
\end{enumerate}
\end{theorem}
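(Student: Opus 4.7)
The plan is to follow the classical Statman--Orevkov strategy: exhibit an explicit family of sequents encoding iterated composition, give a polynomial-size proof with cuts by reusing intermediate lemmas via the cut rule, and derive the non-elementary lower bound on cut-free proofs via Herbrand's theorem. Property~1 will be immediate from the construction.

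First I would fix $S_i$ along the lines of Statman's sequents, encoding iterated composition: the antecedent states, for a binary relation $R$ and a unary function $f$, a closure condition of the form $R(x,y) \to R(f(x),f(y))$ together with suitable initial data $R(c,d)$, while the succedent is an atomic consequence that, once unfolded, forces witness terms of depth $2_i$, the tower of twos of height $i$. The formula sizes grow linearly or polynomially in $i$, establishing property~1.

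For property~3, I would exhibit explicit $\LK$-proofs with cuts. The key device is a sequence of cut formulas $T_k(x,y)$ expressing ``$y$ is reached from $x$ by $2^k$ applications of $f$'', together with short derivations of $T_{k+1}$ from two copies of $T_k$ via a single cut. Iterating this doubling $i$ times yields the required witness using only polynomially many inferences, since each stage of the construction has polynomial cost and the stages compose sequentially.

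The hard part is property~2. The plan is to invoke Herbrand's theorem (or, equivalently, Gentzen's midsequent theorem) to extract from any cut-free $\LK$-proof $\pi$ of $S_i$ a Herbrand disjunction whose number of disjuncts is polynomial in the size of $\pi$, and then to show combinatorially that every Herbrand disjunction for $S_i$ must mention at least $2_i$ distinct witness terms, because the iterated closure condition cannot be discharged propositionally without enumerating a chain of that length. Since $2_i$ is not bounded by any elementary function of $i$, no elementary bound on cut-free proof size is possible. The main obstacle is precisely this Herbrand-complexity lower bound; this is where Statman and Orevkov do their real work, and I would follow their strategy of reducing the iterated-composition structure of $S_i$ to a tower-height lower bound on the number of necessary witnesses.
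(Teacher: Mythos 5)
Note first that the paper does not prove this theorem at all: it is imported by citation from Statman and Orevkov (with the concrete family of sequents taken from the literature), so there is no ``paper proof'' to match; your outline is an attempt to reconstruct the classical argument itself. The overall strategy you describe --- explicit sequents encoding iterated growth, polynomial-size proofs with cuts obtained by reusing lemmas via cut, and a lower bound on cut-free proofs extracted through Herbrand/midsequent complexity --- is indeed the Statman--Orevkov route.

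As a proof, however, the attempt has two genuine gaps. First, the compression mechanism you sketch for property~3 does not deliver what property~2 needs. Cut formulas $T_k(x,y)$ saying ``$y$ is reached from $x$ by $2^k$ applications of $f$'', combined by one cut per level, compress only a single exponential: after $i$ doublings you force witnesses of depth $2^i$, which is elementary in $i$, so the family as described would admit cut-free proofs of elementary size and property~2 would be false for it. To obtain the tower $2_i$ one needs cut formulas of \emph{growing quantifier complexity}, each level quantifying over the previous one (in effect defining the stages of iterated exponentiation), so that a polynomial-size proof with cuts forces Herbrand complexity of tower height; this nesting is the heart of the Statman/Orevkov construction and is absent from your sketch. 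Second, the lower bound itself --- that every Herbrand disjunction (equivalently, every midsequent) for $S_i$ must contain on the order of $2_i$ distinct witness terms --- is exactly the mathematical content of property~2, and your proposal explicitly defers it (``this is where Statman and Orevkov do their real work''). Extracting a Herbrand disjunction of size bounded by the cut-free proof is routine; proving that no short disjunction exists is not, and without that combinatorial argument (or an explicit appeal to the cited results, as the paper itself does) the theorem is not established.
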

In the following we will consider the sequence of sequents $\{S_i\}_{i<\omega}$ from Theorem \ref{th.orevkov} above.
\begin{corollary}
Each worst-case sequence as formulated in Theorem \ref{th.orevkov} generates a worst-case sequence, where the end-sequents contain weak quantifiers only.
\end{corollary}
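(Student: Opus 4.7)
The plan is to Skolemize the sequents from Theorem~\ref{th.orevkov}. Given the worst-case sequence $\{S_i\}_{i<\omega}$, let $S_i^{\mathrm{sk}}$ denote the Skolemization of $S_i$: each strong quantifier occurrence (positive $\forall$, negative $\exists$) is replaced by a fresh Skolem function applied to the free variables and to the weak-quantifier variables whose scope contains it. The resulting sequent contains only weak quantifiers, and $|S_i^{\mathrm{sk}}|$ is polynomial in $|S_i|$, and hence polynomial in $i$, which already establishes property~(1) of Theorem~\ref{th.orevkov}.

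For property~(3), I would transform the given polynomial-size {\LK}-proof with cuts of $S_i$ into an {\LK}-proof with cuts of $S_i^{\mathrm{sk}}$ by replacing each strong quantifier inference with the substitution of the corresponding Skolem term for the eigenvariable — the same idea as the translation into {\LE} in Proposition~\ref{prop.1}. The size of the transformed proof grows only polynomially (each formula occurrence in the proof enlarges by the size of the introduced Skolem terms), so the polynomial upper bound for proofs with cuts is preserved.

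For property~(2), the non-elementary lower bound on cut-free proofs, I would argue by contradiction. Suppose $S_i^{\mathrm{sk}}$ admits a cut-free {\LK}-proof of size elementary in $i$. I would then de-Skolemize this proof by introducing, for each Skolem function $f$ and each distinct argument-tuple $\vec{t}$ with which it occurs, a fresh eigenvariable $\alpha_{f,\vec{t}}$ in place of $f(\vec{t})$, and inserting the corresponding $\forall_r$ or $\exists_l$ inference at the appropriate point. Since the number of distinct Skolem terms is bounded by the proof size, the resulting cut-free {\LK}-proof of $S_i$ has at most polynomially larger size, and so its size is still elementary in $i$, contradicting Theorem~\ref{th.orevkov}.

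The main obstacle lies in justifying the de-Skolemization step — in particular, placing the reintroduced strong quantifier inferences so that every fresh eigenvariable $\alpha_{f,\vec{t}}$ satisfies the eigenvariable condition (occurring only above its introduction and only in the active formula). This is manageable in the cut-free setting because eigenvariables can be kept disjoint across branches, but weakenings and contractions that duplicate Skolem terms require careful bookkeeping; this is essentially the classical argument that cut-free de-Skolemization is polynomial.
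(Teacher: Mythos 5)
Your overall route is the same as the paper's: the generated worst-case sequence is the Skolemization $S_i^{\mathrm{sk}}$ of $S_i$, and the paper's entire proof is the observation (with a citation to Baaz--Leitsch) that strong quantifiers in an {\LK}-proof can be replaced by Skolem functions without lengthening the proof or introducing cuts. Your treatment of properties (1) and (3) fills in the same idea, but one point needs correction: in a proof \emph{with} cuts you may only replace those strong quantifier inferences whose main formula is an ancestor of the end-sequent. Inferences operating on cut-formula ancestors have no ``corresponding Skolem term'' (Skolemization concerns the end-sequent only), and substituting there would make the two premises of the cut no longer match; since the Statman/Orevkov cut formulas are quantified, this case genuinely occurs. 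Taken literally, ``replacing each strong quantifier inference'' breaks the proof; restricted to end-sequent ancestors it is exactly the standard Skolemization of proofs that the paper cites. The analogy with Proposition~\ref{prop.1} is also slightly misleading, because the $\varepsilon$-translation rewrites the cut formulas themselves, which first-order Skolemization cannot do.

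For property (2) you go beyond the paper's one-line proof and transfer the lower bound by de-Skolemizing a hypothetical short cut-free proof of $S_i^{\mathrm{sk}}$. The contradiction argument is structurally sound, but the claim that cut-free de-Skolemization is ``polynomial'' is an overclaim: the known general upper bounds for de-Skolemizing cut-free proofs of non-prenex end-sequents are exponential, and the placement of the reintroduced $\forall_r$/$\exists_l$ inferences (the point you yourself flag as the main obstacle) is precisely where a polynomial procedure fails. Fortunately only an \emph{elementary} bound is needed to contradict Theorem~\ref{th.orevkov}, so your argument survives once the bound is weakened accordingly. Alternatively, de-Skolemization can be avoided altogether: the Statman/Orevkov lower bound is in essence a bound on Herbrand complexity, which is invariant under Skolemization of the end-sequent, so property (2) holds for $S_i^{\mathrm{sk}}$ directly --- this is presumably what the paper's terse proof takes for granted.
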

\begin{proof}
Strong quantifiers in a cut-free $\LK$ proof can be replaced by Skolem functions without lengthening the proof or introducing cuts \cite{baaz2011methods}.
\end{proof}
\begin{definition}
The matrix $A^M$ of a first-order formula $A$ is $A$, after deletion of all quantifiers and after replacement of bound variables by free variables.
\end{definition}
\begin{example}
$[\exists x (\forall y A(x,y) \lor B(x))]^M = A(a,b) \lor B(a)$.
\end{example}
\begin{lemma}
There is a specific family of sequents $\{S_i\}_{i<\omega}$ such that they have the following properties:
\begin{enumerate}
	\item the size of $S_i$ is polynomial in $i$;
	\item there is no bound on the size of their smallest cut-free {\LK}-proofs that is elementary in $i$;
	\item the size of these proofs (with cuts), however, is polynomially bounded in $i$;
	\item they contain only weak quantifiers;
	\item on the left-side of the conclusion for every cut $A$, $\forall \overline{x} (A^M \to A^M)$ is added.
\end{enumerate}
\end{lemma}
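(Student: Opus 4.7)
\emph{Plan.} The plan is to take the family $\{T_i\}_{i<\omega}$ produced by the preceding corollary, which already satisfies (1)--(4) above, and to fix for each $i$ the polynomial-size \LK-proof-with-cuts $P_i$ given by Theorem~\ref{th.orevkov}(3). Since $|P_i|$ is polynomial in $i$, the cut formulas $A_{i,1},\dots,A_{i,k_i}$ occurring in $P_i$ are polynomially many and each of polynomial size in $i$. I would then define $S_i$ to be $T_i$ with $\forall \overline{x_j}\,(A_{i,j}^M \to A_{i,j}^M)$ added to its antecedent, one for each cut in $P_i$, where $\overline{x_j}$ lists the free variables of $A_{i,j}^M$.

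Items (1), (4), and (5) are then immediate from the construction: each added formula is of polynomial size and there are polynomially many of them, so $|S_i|$ remains polynomial; the added quantifier prefix consists of left-universals (weak) over the quantifier-free matrix $A_{i,j}^M \to A_{i,j}^M$, and $T_i$ itself already contains only weak quantifiers by the corollary; (5) holds by definition. For item (3), take $P_i$ and apply $k_i$ left-weakenings at the root, giving a proof of $S_i$ with cuts of size $|P_i|+k_i$, still polynomial in $i$.

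The main obstacle is item (2), the non-elementary cut-free lower bound for $S_i$, which I would establish via Herbrand complexity. Since $S_i$ contains only weak quantifiers, the mid-sequent theorem applied to any cut-free \LK-proof of $S_i$ yields a Herbrand expansion whose number of instances is bounded by the proof size. Each added antecedent $\forall \overline{x_j}\,(A_{i,j}^M \to A_{i,j}^M)$ is a universally closed tautology, so any instance $A_{i,j}^M(\overline{t}) \to A_{i,j}^M(\overline{t})$ arising from it in the expansion is a propositional tautology; deleting all such instances from a Herbrand expansion preserves propositional validity. Hence the Herbrand complexity of $S_i$ coincides with that of $T_i$, which is non-elementary by Theorem~\ref{th.orevkov} together with the corollary (for weak-quantifier-only sequents, cut-free \LK-proof size and Herbrand complexity are polynomially related). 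Thus every cut-free \LK-proof of $S_i$ has non-elementary size. The sole point that needs care in the write-up is precisely the propositional observation that tautological prenex-$\forall$ antecedents cannot shorten a Herbrand expansion; everything else reduces to bookkeeping about the construction of $S_i$ from $P_i$.
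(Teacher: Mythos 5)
Your proposal is essentially correct, but for the crucial item (2) you take a genuinely different route than the paper. The paper argues purely proof-theoretically: in a cut-free {\LK}-proof of $S_i$ the added antecedent formulas can only act through $\forall_l$ followed by $\to_l$ on instances $A^M(\overline{t}) \to A^M(\overline{t})$; replacing each such $\to_l$ by a cut yields a proof of the original weak-quantifier sequent with quantifier-free (propositional) cuts only, and these can be eliminated at no more than double-exponential cost, so the non-elementary lower bound transfers back to $S_i$. You instead argue via Herbrand complexity: extract a Herbrand expansion from a hypothetical short cut-free proof of $S_i$, delete the tautological instances $B \to B$ stemming from the added universally closed formulas (which preserves propositional validity), and conclude that the Herbrand complexity of $S_i$ is essentially that of the Skolemized Statman sequents, hence non-elementary. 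Both routes work; yours avoids the appeal to the double-exponential propositional cut-elimination bound and isolates cleanly why tautological $\forall$-antecedents cannot shorten cut-free proofs, while the paper's transformation stays inside sequent-calculus manipulations, which is exactly the mechanism it reuses afterwards for the $\LE$-speed-up. Three points to tighten in a final write-up: (i) cut-free proof size and Herbrand complexity are only \emph{elementarily}, not polynomially, related in the direction from expansion to proof (the propositional part may blow up exponentially), but elementary relatedness suffices to preserve ``non-elementary''; (ii) the mid-sequent theorem presupposes prenex end-sequents, so either keep the sequents prenex or extract the Herbrand instances directly from the weak quantifier inferences of the cut-free proof, which is unproblematic since all quantifiers are weak; (iii) the polynomial-size proofs with cuts that you weaken must be proofs of the Skolemized sequents $T_i$ (obtained by Skolemizing the end-sequents of the given proofs with cuts, which does not lengthen them), not the original proofs from Theorem~\ref{th.orevkov}, so that the weakened proof ends in the intended sequent and the matrices $A^M$ in item (5) are those of its cut formulas.
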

\begin{proof}
For the proofs with cut the addition of $\forall \overline{x} (A^M \to A^M)$ might lead to even shorter proofs, for the cut-free proofs the proofs may be double exponentially shorter if the newly added universal formulas are eliminated in the following way: 
In the moment where the corresponding implication left is inferred, replace this inference by a cut. In consequence, there is a proof with propositional cuts only, which can be eliminated in at most double exponential expense \cite{DBLP:journals/tcs/Weller11}.
\end{proof}
\begin{theorem}
There is a sequence of cut-free $\LE$-proofs such that 
\begin{enumerate}
	\item the size of $S_i$ is polynomial in $i$;
	\item the end-sequents $S_i$ are translations of first-order sequents $S'_i$ with weak quantifiers only;
	\item the size of these proofs, however, is polynomially bounded in $i$;
	\item there is no bound on the size of the smallest cut-free {\LK}-proofs of the translation of $S_i$ to first-order language that is elementary in $i$.
\end{enumerate}
\end{theorem}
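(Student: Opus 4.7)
The plan is to take $S'_i$ to be the modified worst-case sequents from the preceding lemma and to set $S_i := [S'_i]^\varepsilon$. Properties 1, 2, and 4 of the theorem are then almost immediate: $S_i$ is polynomial in the polynomial-sized $S'_i$; $[S_i]^{\forall\exists} = S'_i$ contains only weak quantifiers by property 4 of the lemma; and the non-elementary lower bound on cut-free $\LK$-proofs of $S'_i$ is exactly property 2 of the lemma. The only substantial task is to produce cut-free $\LE$-proofs of $S_i$ of polynomial size.

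Starting from the polynomial-size $\LK$-proof of $S'_i$ with cuts guaranteed by property 3 of the lemma, I would apply Proposition~\ref{prop.1} to obtain an $\LE$-proof of $S_i$ of at most the same length, still containing cuts. The key step is then to eliminate each such cut \emph{locally} using the added assumption. Consider a cut in the $\LE$-proof on a formula $A^\varepsilon$, where $A$ is the original first-order cut formula. Since $A^\varepsilon = A^M(\overline{t})$ for the specific nested $\varepsilon$-terms $\overline{t}$ produced by the translation, I would replace the cut by a single $\to_l$-inference with main formula $A^M(\overline{t}) \to A^M(\overline{t})$ on the two premises of the cut, and then apply $\forall_l$ iteratively---innermost quantifier first, one application per bound variable of $A$---to fold this antecedent formula into $[\forall \overline{x}(A^M \to A^M)]^\varepsilon$. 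This formula is present on the left of $S_i$ by property 5 of the lemma, so a final contraction absorbs repeated occurrences when the same $A$ is used as a cut formula more than once.

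Each cut is thus replaced by at most $k+1$ new inferences, where $k$ is the quantifier depth of $A$. Since both the number of cuts and the quantifier depth of each cut formula are polynomial in $i$, the total overhead is polynomial, and the resulting cut-free $\LE$-proof of $S_i$ has size polynomial in $i$, establishing property 3.

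The main technical obstacle I anticipate is the $\varepsilon$-term bookkeeping: the translation of $\forall \overline{x}(A^M \to A^M)$ produces a specific sequence of nested $\varepsilon$-terms, and one has to verify that repeated $\forall_l$ applied to $A^M(\overline{t}) \to A^M(\overline{t})$ really reconstructs this translation in the antecedent. This reduces to checking, at each $\forall_l$-step, that the $\varepsilon$-term $\varepsilon_x \neg C(x)$ chosen by the rule for the current formula $C$ matches the outermost $\varepsilon$-term of the translation at that stage. This is immediate from the recursive definition of $[\cdot]^\varepsilon$ but requires care about the order in which the quantifiers are generalized, and is the only point in the argument where the epsilon-format really earns its keep.
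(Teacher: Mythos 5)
Your proposal is correct and follows essentially the same route as the paper's proof: translate the with-cut $\LK$-proofs from the preceding lemma into $\LE$ via Proposition~\ref{prop.1}, replace each cut on $A$ by an $\to_l$-inference on $[A]^\varepsilon \to [A]^\varepsilon$, fold this into $[\forall \overline{x}(A^M \to A^M)]^\varepsilon$ by iterated $\forall_l$ (the overbinding step), and contract with the copy already present in the end-sequent. Your additional bookkeeping about the innermost-first order of the $\forall_l$ applications is just a more explicit rendering of the paper's ``derive immediately $[\forall \overline{x}(A^M \to A^M)]^\varepsilon$'' step.
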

\begin{proof}
We choose a sequence of $\LK$-proofs from the lemma above.
We translate the proofs with cut into epsilon calculus (this does not lengthen the proof according to Proposition \ref{prop.1}).
In the $\LE$-proof we replace all cuts on $A$ by inferences of $A \to A$ on the left side.
Derive immediately $[\forall \overline{x} (A^M \to A^M)]^\epsilon$.
Contract it with $\forall \overline{x} (A^M \to A^M)$ which is already in the end-sequent.
\end{proof}
Note that the extended first epsilon theorem \cite{hilbertbernays39} provides an upper bound for cut-free $\LE$-derivations in the rough size of $2^{\left. 2^{.^{.^{.}}} \right\}i}$ for the $i$-th cut-free $\LE$-derivation.
The question remains however, whether $\LE$-derivations with cuts can be translated into $\LK$-derivations with cuts in an elementary way.

\section{$\LKp$ and $\LKpp$}
Another example of the speed-up of cut-free proofs as in Section \ref{sec.3} relates to the sequent calculi $\LKp$ and $\LKpp$ introduced in \cite{DBLP:journals/jsyml/AguileraB19}. 
They are obtained from $\LK$ by weakening the eigenvariable conditions. The resulting calculi are therefore globally but possibly not locally sound. This means that all derived statements are true but that not every sub-derivation is meaningful. 

Note that there is already a non-elementary speed-up of cut-free proofs of $\LKp$, or $\LKpp$ w.r.t. cut-free {\LK}-proofs \cite{DBLP:journals/jsyml/AguileraB19}.
In contrast, the transformation of cut-free $\LKpp$-proofs into cut-free $\LKp$-proofs is elementary bounded \cite{DBLP:conf/wollic/BaazL23}.
\begin{definition}[side variable relation $<_{\varphi, \LK}$, cf. \cite{DBLP:journals/jsyml/AguileraB19}]
Let $\varphi$ be an $\LK$-derivation. We say $b$ is a side variable of $a$ in $\varphi$ (written $a <_{\varphi, \LK} b$) if $\varphi$ contains a strong quantifier inference of the form

\begin{prooftree}
	\AxiomC{$\Gamma \vdash \Delta, A(a,b, \overline{c})$}
	\RightLabel{$\forall_r$}
	\UnaryInfC{$\Gamma \vdash \Delta, \forall x A(x,b,\overline{c})$}
\end{prooftree}
or of the form

\begin{prooftree}
	\AxiomC{$A(a,b, \overline{c}), \Gamma \vdash \Delta$}
	\RightLabel{$\exists_l$}
	\UnaryInfC{$\exists x A(x,b,\overline{c}), \Gamma \vdash \Delta$}
\end{prooftree}
We may omit the subscript ${\varphi, \LK}$ in $<_{\varphi, \LK}$ if it is clear from the context.
\end{definition}
In addition to strong and weak quantifier inferences we define $\LKp$-suitable quantifier inferences.
\begin{definition}[$\LKp$-suitable quantifier inferences, cf. \cite{DBLP:journals/jsyml/AguileraB19}] \label{def.5}
We say a quantifier inference is suitable for a proof $\varphi$ if either it is a weak quantifier inference, or the following three conditions are satisfied:
\begin{itemize}
	\item (substitutability) the eigenvariable does not appear in the conclusion of $\varphi$.
	\item (side variable condition) the relation $<_{\varphi, \LK}$ is acyclic.
	\item (weak regularity) the eigenvariable of an inference is not the eigenvariable of another strong quantifier inference in $\varphi$.
\end{itemize}
\end{definition}
\begin{definition}[$\LKp$, cf. \cite{DBLP:journals/jsyml/AguileraB19}] 
$\LKp$ is obtained from $\LK$ by replacing the usual eigenvariable conditions by $\LKp$-suitable ones.
\end{definition}
Similarly to $\LKp$, we define the calculus $\LKpp$ by further weakening the eigenvariable conditions 
\begin{definition}[$\LKpp$-suitable quantifier inferences, cf. \cite{DBLP:journals/jsyml/AguileraB19}] \label{def.7}
We say a quantifier inference is suitable for a proof $\varphi$ if either it is a weak quantifier inference, or it satisfies  
\begin{itemize}
	\item substitutability,
	\item the side variable condition, and 
	\item (very weak regularity) the eigenvariable of an inference with main formula $A$ is different to the eigenvariable of an inference with main formula $A'$ whenever $A \not = A'$.
\end{itemize}
\end{definition}

\begin{definition}[$\LKpp$, cf. \cite{DBLP:journals/jsyml/AguileraB19}]
$\LKpp$ is obtained from $\LK$ by replacing the usual eigenvariable conditions by $\LKpp$-suitable ones.
\end{definition}
\begin{theorem} \label{TheoremCorrectness}
\mbox{ }
\begin{enumerate}
\item If a sequent is $\LKp$-derivable, then it is already $\LK$-derivable.
\item If a sequent is $\LKpp$-derivable, then it is already $\LK$-derivable.
\end{enumerate}
\end{theorem}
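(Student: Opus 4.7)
The plan is to prove both parts by syntactic transformation, converting the given derivation in the weaker calculus into an \LK-derivation of the same end-sequent through systematic renaming of eigenvariables. I would handle part (2) by reducing $\LKpp$ to $\LKp$ first, and then attack part (1) directly.

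For the reduction from $\LKpp$ to $\LKp$: given an $\LKpp$-derivation $\varphi$, the only way it can fail the $\LKp$-suitability of Definition~\ref{def.5} is through weak regularity, since substitutability and the side variable condition are shared between the two definitions. By very weak regularity, any eigenvariable $a$ shared by distinct strong quantifier inferences in $\varphi$ must be associated there with the same main formula $A$. For each such clash, select all but one of those inferences and, in each selected subderivation, substitute a globally fresh variable $a^{*}$ for $a$ throughout. Because the main formulas agree, the conclusion of each modified sub-inference is unchanged, so the rest of $\varphi$ remains well-formed; the side variable relation stays acyclic (any edge created by the fresh variable is between vertices that were already unrelated), and substitutability is preserved. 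After finitely many such renamings, every eigenvariable belongs to a unique strong inference, so $\varphi$ is now an $\LKp$-derivation of the same end-sequent.

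For part (1), let $\varphi$ be an $\LKp$-derivation of $S$. Acyclicity lets me fix a linear extension $a_{1},\dots,a_{n}$ of $<_{\varphi,\LK}$, with each $a_{k}$ the unique eigenvariable of a strong inference $(I_{k})$ by weak regularity. I process $k=n,n-1,\dots,1$ in reverse: at step $k$, replace $a_{k}$ by a globally fresh variable $a_{k}^{*}$ everywhere in the subderivation above $(I_{k})$, and leave the rest of the proof untouched. After the renaming, $a_{k}^{*}$ occurs only in that subderivation, so $(I_{k})$ satisfies the standard \LK\ eigenvariable condition. Substitutability guarantees $a_{k}\notin S$, so the renaming never propagates out of the end-sequent. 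The reverse topological order ensures that whenever we reach $(I_{k})$, every $a_{j}$ that is a side variable of $(I_{k})$ (so $a_{k}<_{\varphi,\LK}a_{j}$, hence $j>k$) has already been replaced by its own fresh $a_{j}^{*}$ in the subderivation above $(I_{k})$, and those replacements are untouched by the current step. After all $n$ steps, the result is a proper \LK-derivation of $S$.

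The principal obstacle is bookkeeping the interaction between the various renamings: a substitution installed at $(I_{k})$ must not re-break the eigenvariable condition at another strong inference. This is precisely what acyclicity buys. A topological ordering of the eigenvariables sequentialises the renamings so that each modification affects only variables whose associated inferences lie later in the ordering, and each such modification replaces a side variable already handled by an earlier fresh name. Without acyclicity two eigenvariables would sit inside each other's scopes, which no local renaming can disentangle; with it, the procedure terminates and yields an \LK-derivation whose end-sequent coincides with that of $\varphi$.
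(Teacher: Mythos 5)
Your renaming strategy cannot work, and the failure is visible already in the paper's own example: the $\LKp$-derivation that infers $A(a) \vdash \forall y A(y)$ from $A(a) \vdash A(a)$ and continues to $\vdash \exists x (A(x) \to \forall y A(y))$. The substitutability condition of Definition~\ref{def.5} only forbids the eigenvariable from occurring in the \emph{end-sequent of the whole proof}; it explicitly permits it to occur in the side formulas of the conclusion of the strong inference itself, and this is exactly what makes $\LKp$ locally unsound. Renaming $a$ to a fresh $a^{*}$ in the subderivation above such an inference does not help: either you rename the context occurrences as well, in which case the conclusion of the inference becomes $A(a^{*}) \vdash \forall y A(y)$ and still violates the \LK{} eigenvariable condition (and the change propagates downward, eventually altering or breaking the proof below), or you leave the conclusion untouched, in which case the figure is no longer a well-formed inference. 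More fundamentally, the intermediate sequent $A(a) \vdash \forall y A(y)$ is not valid, so no variable renaming can turn that subderivation into an \LK-derivation; the obstruction is semantic, not a matter of variable bookkeeping. Your argument implicitly assumes the only defect of an $\LKp$-inference is a clash of eigenvariable names across the proof, which is the situation handled by regularization in ordinary \LK, but not the situation created by weakening the eigenvariable condition itself. Note also that a size-preserving (or even elementary) cut-free--to--cut-free translation of the kind your construction would yield contradicts the non-elementary speed-up of cut-free $\LKp$ over cut-free \LK{} cited in the paper.

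The paper's proof takes a different route precisely because of this: every strong quantifier inference that is unsound with respect to \LK{} is replaced by an $\to_{l}$ inference, which adds a hypothesis $A(a) \to \forall x A(x)$ (respectively $\exists x A(x) \to A(a)$) to the antecedent; the resulting derivation uses no eigenvariable conditions at all, and the extra hypotheses are then removed by $\exists_{r}$ inferences followed by cuts with the easily \LK-derivable sequents $\vdash \exists y (A(y) \to \forall x A(x))$ and $\vdash \exists y (\exists x A(x) \to A(y))$. Any correct proof of Theorem~\ref{TheoremCorrectness} has to introduce cuts (or some equivalent detour) in this way; a purely structural renaming cannot suffice.
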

\begin{proof}[Proof Sketch]
Consider an $\LKpp$-proof $\varphi$ (an $\LKp$ proof is also an $\LKpp$-proof). Replace every universal quantifier inference unsound w.r.t. $\LK$ by an $\to_l$ inference:
\begin{prooftree}
	\AxiomC{$\Gamma \vdash \Delta, A(a)$}
	\AxiomC{$\forall x A(x) \vdash \forall x A(x)$}
	\RightLabel{$\to_l$}
	\BinaryInfC{$\Gamma, A(a) \to \forall x A(x) \vdash \Delta, \forall x A(x)$}
\end{prooftree}
Similarly, replace every existential quantifier inference unsound w.r.t. $\LK$ by an $\to_l$ inference:
\begin{prooftree}
	\AxiomC{$\exists x A(x) \vdash \exists x A(x)$}
	\AxiomC{$A(a), \Gamma \vdash \Delta$}
	\RightLabel{$\to_l$}
	\BinaryInfC{$\Gamma, \exists x A(x), \exists x A(x) \to A(a) \vdash \Delta$}
\end{prooftree}
By doing this, we obtain a proof of the desired sequent, together with formulas of the form 
$$A(a) \to \forall x A(x) \quad \mbox{or} \quad \exists x A(x) \to A(a)$$
on the left-hand side. Note that the resulting derivation does not contain any inference based on eigenvariable conditions. We can eliminate each of the additional formulas on the left-hand side by adding an existential quantifier inference and cutting with sequents of the form
$$\vdash \exists y (A(y) \to \forall x A(x))$$
or of the form
$$\vdash \exists y (\exists x A(x) \to A(y)),$$
both of which are easily derivable. For more details see \cite{DBLP:journals/jsyml/AguileraB19}.
\end{proof}
\begin{example} Consider the following locally unsound but globally sound $\LKp$-derivation $\varphi$:
\begin{prooftree}
	\AxiomC{$A(a) \vdash A(a)$}
	\RightLabel{$\forall_r$}
	\UnaryInfC{$A(a) \vdash \forall y A(y)$}
	\RightLabel{$\to_r$}
	\UnaryInfC{$\vdash A(a) \to \forall y A(y)$}
	\RightLabel{$\exists_r$}
	\UnaryInfC{$\vdash \exists x (A(x) \to \forall y A(y))$}
\end{prooftree}
As $a$ is the only eigenvariable the side variable relation $<_{\varphi, \LK}$ is empty.
\end{example} 
The focus in \cite{DBLP:journals/jsyml/AguileraB19} has been on the strongly reduced complexity of cut-free $\LKp$- and $\LKpp$-proofs (Theorem $2.6$ and Corollary $2.7$).

Note that all three conditions of Definition \ref{def.5} and Definition \ref{def.7} are necessary.
\begin{example}
If substitutability is violated, the following derivation is possible
\begin{prooftree}
	\AxiomC{$A(a) \vdash A(a)$}
	\RightLabel{$\forall_r$}
	\UnaryInfC{$A(a) \vdash \forall x A(x)$}
\end{prooftree}
If the side variable relation is not acyclic, the following derivation $\varphi$ is possible (with the side variable conditions $a <_{\varphi, \LK} b$ and $b <_{\varphi, \LK} a$, which loop)

\begin{prooftree}
	\AxiomC{$A(a,b) \vdash A(a,b)$}
	\RightLabel{$\forall_r$}
	\UnaryInfC{$A(a,b) \vdash \forall y A(a,y)$}
	\RightLabel{$\exists_r$}
	\UnaryInfC{$A(a,b) \vdash \exists x \forall y A(x,y)$}
	\RightLabel{$\exists_l$}
	\UnaryInfC{$\exists x A(x,b) \vdash \exists x \forall y A(x,y)$}
	\RightLabel{$\forall_l$}
	\UnaryInfC{$\forall y \exists x A(x,y) \vdash \exists x \forall y A(x,y)$}
\end{prooftree}
If weak regularity is violated, the following derivation is possible
\begin{prooftree}
	\AxiomC{$A(a) \vdash A(a)$}
	\RightLabel{$\forall_r$}
	\UnaryInfC{$A(a) \vdash \forall x A(x)$}
	\RightLabel{$\exists_l$}
	\UnaryInfC{$\exists y A(y) \vdash \forall x A(x)$}
\end{prooftree}
\end{example}
\begin{lemma} \label{lem.2}
There is a specific family of sequents $\{S_i\}_{i<\omega}$ with the following properties:
\begin{enumerate}
	\item the size of $S_i$ is polynomial in $i$;
	\item there is no bound on the size of their smallest cut-free $\LKp$-proofs (or $\LKpp$-proofs) that is elementary in $i$;
	\item the size of these proofs (with cuts), however, is polynomially bounded in $i$;
	\item the end-sequents have only weak quantifiers;
	\item on the left-side of the conclusion for every cut $A$, $\forall \overline{x} (A^M \to A^M)$ is added.
\end{enumerate}
\end{lemma}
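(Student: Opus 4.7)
The plan is to mirror the proof of the earlier lemma (the $\LK$-version just before the main theorem), but starting from the known worst-case family for $\LKp$/$\LKpp$ rather than for $\LK$. That family is provided by Theorem~2.6 and Corollary~2.7 of \cite{DBLP:journals/jsyml/AguileraB19}, which already gives (1)--(3) for some sequence $\{T_i\}_{i<\omega}$. I would first invoke the Skolemization argument from \cite{baaz2011methods} (exactly as in the corollary after Theorem~\ref{th.orevkov}) to replace strong quantifiers in the end-sequent by Skolem function symbols without lengthening the proofs, yielding a worst-case sequence satisfying (4). This step works uniformly for cut-free $\LKp$ and $\LKpp$ because Skolemization commutes with the local structure of quantifier rules and is insensitive to the eigenvariable-condition weakening that distinguishes $\LKp$ and $\LKpp$ from $\LK$.

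Next, for each cut formula $A$ occurring in the short proof with cuts, I would add $\forall \overline{x}(A^M \to A^M)$ on the left of the end-sequent, producing the enriched sequent $S_i$. Property (1) is preserved because the enrichment is polynomial in the size of the proof with cuts, which is itself polynomial in $i$ by property~(3). For property (3) after enrichment, the added universal formulas can simply be weakened out, or derived trivially and then left on the left by an extra weakening/contraction layer; either way only polynomial overhead is incurred.

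The decisive point is (2), the non-elementary lower bound for cut-free $\LKp$/$\LKpp$ proofs of the enriched $S_i$. I would argue by contradiction exactly as in the previous lemma: suppose there were cut-free $\LKp$/$\LKpp$ proofs of $S_i$ of elementary size. At the inference where each added implication $A^M \to A^M$ is introduced on the left via $\to_l$, replace the inference by a cut on $A^M$. The result is a $\LKp$/$\LKpp$ proof of the original (pre-enrichment) sequent $T_i$ containing only propositional cuts. By the cut-elimination bound for propositional cuts in \cite{DBLP:journals/tcs/Weller11}, these can be removed at at most double-exponential cost, yielding a cut-free $\LKp$/$\LKpp$ proof of $T_i$ of elementary size --- contradicting the Aguilera--Baaz non-elementary lower bound for cut-free $\LKp$/$\LKpp$ proofs of $T_i$.

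The main obstacle is justifying that Weller's elimination of propositional cuts goes through in $\LKp$ and $\LKpp$. This is, however, routine: the cut-elimination procedure for propositional cuts only manipulates the propositional skeleton of the proof and never introduces new quantifier inferences, so it neither creates new eigenvariable collisions nor violates substitutability, acyclicity of $<_{\varphi}$, or (very) weak regularity; hence the resulting cut-free proof is still a legitimate $\LKp$ (respectively $\LKpp$) proof. Property~(5) holds by construction. This completes the plan.
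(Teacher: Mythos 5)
Your plan assumes the hardest part of the lemma instead of proving it. Theorem~2.6 and Corollary~2.7 of \cite{DBLP:journals/jsyml/AguileraB19} do not supply a family with a non-elementary \emph{lower} bound on cut-free $\LKp$/$\LKpp$-proofs together with short proofs with cuts; as this paper itself recalls, those results go in the opposite direction --- they exhibit sequents whose cut-free $\LKp$/$\LKpp$-proofs are non-elementarily \emph{shorter} than any cut-free $\LK$-proof. So property (2) for your starting family $\{T_i\}$ is exactly what still has to be established. The paper obtains (2) by a different route: it starts from the Statman/Orevkov $\LK$ worst case and uses property (4) as the lever. Once the end-sequents contain weak quantifiers only (the added formulas $\forall \overline{x}(A^M \to A^M)$ are on the left and hence weak), a cut-free $\LKp$- or $\LKpp$-proof contains no strong quantifier inferences by the subformula property and is literally a cut-free $\LK$-proof, so the classical non-elementary lower bound applies (combined, as in the preceding lemma, with replacing the $\to_l$ inferences on $A^M \to A^M$ by cuts and eliminating these propositional cuts at double-exponential cost via \cite{DBLP:journals/tcs/Weller11}). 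Staying inside $\LK$ here also sidesteps your claim that propositional cut-elimination ``routinely'' preserves the $\LKp$-conditions, which is not innocent: cut reductions duplicate subproofs and hence eigenvariables, which violates weak regularity unless one renames, and renaming in merely globally sound proofs needs an argument.

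The second gap is your Skolemization step. You assert that replacing strong quantifiers in the end-sequent by Skolem functions is ``insensitive to the eigenvariable-condition weakening''; the paper's entire proof of this lemma consists of the observation that this is false. In an $\LKp$/$\LKpp$-proof the eigenvariable of a strong inference may occur in the lower sequent of that very inference and may later serve as the witness term of a weak inference (as in the paper's derivation of $\exists x (A(x) \to \forall y A(y))$, where the eigenvariable $a$ is also the $\exists_r$-witness), so the Skolem term one would have to substitute contains the eigenvariable itself and direct substitution into strong quantifiers breaks down. The paper's fix is to Skolemize by adding cuts with sequents of the form $\forall x A(x,t) \vdash A(f(t),t)$ (dually for $\exists$), which lengthens the proofs with cuts only linearly and so preserves (3); the same obstacle would also hit any deskolemization you would implicitly need in order to transfer a cut-free $\LKp$/$\LKpp$ lower bound from $T_i$ to its Skolemized version. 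Without these two ingredients --- Skolemization via additional cuts, and deriving (2) from the $\LK$ lower bound through weak-quantifier-only end-sequents --- your argument does not go through.
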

\begin{proof}
Note that Skolemization is not possible by direct substitution into strong quantifiers. However, Skolemization can be performed by adding additional cuts, which lengthen the proof linearly:

\begin{prooftree}
	\AxiomC{$\Pi \vdash \Gamma, A(a,t)$}
	\UnaryInfC{$\Pi \vdash \Gamma, \forall x A(x,t)$}
	\AxiomC{$A(f(t),t) \vdash A(f(t),t)$}
	\UnaryInfC{$\forall x A(x,t) \vdash A(f(t),t)$}
	\BinaryInfC{$\Pi \vdash \Gamma, A(f(t),t)$}
\end{prooftree}
and
\begin{prooftree}
	\AxiomC{$A(f(t),t) \vdash A(f(t),t)$}
	\UnaryInfC{$A(f(t),t) \vdash \exists x A(x,t)$}
	\AxiomC{$A(a,t), \Pi \vdash \Gamma$}
	\UnaryInfC{$\exists x A(x,t), \Pi \vdash \Gamma$}
	\BinaryInfC{$A(f(t),t), \Pi \vdash \Gamma$}
\end{prooftree}
\end{proof}
\begin{theorem}
There is a sequence of cut-free $\LE$-proofs such that 
\begin{enumerate}
	\item the size of $S_i$ is polynomial in $i$;
	\item the end-sequents $S_i$ are translations of first-order sequents $S'_i$ with weak quantifiers only;
	\item the size of these proofs, however, is polynomially bounded in $i$;
	\item there is no bound on the size of the smallest cut-free $\LKp$- or $\LKpp$-proofs of the translation of $S_i$ to first-order language that is elementary in $i$.
\end{enumerate}
\end{theorem}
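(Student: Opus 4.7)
The plan is to apply essentially the same construction as in the preceding theorem, but starting from the sequence supplied by Lemma \ref{lem.2} in place of its $\LK$-analogue. Concretely, I would fix a sequence of $\LKp$- (or $\LKpp$-) proofs of polynomial size whose end-sequents use only weak quantifiers and, by property 5 of Lemma \ref{lem.2}, already carry $\forall \overline{x}(A^M \to A^M)$ on the left for every cut formula $A$ occurring in the proof.

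The first step is to push these with-cut proofs into $\LE$. Although Proposition \ref{prop.1} is stated for $\LK$, its proof goes through unchanged for $\LKp$ and $\LKpp$: propositional, structural, weak quantifier and cut rules are copied verbatim, while strong quantifier inferences are replaced by substitution of the appropriate $\varepsilon$-witness ($\varepsilon_x \neg A(x)$ for $\forall_r$ and $\varepsilon_x A(x)$ for $\exists_l$) throughout the subproof. The substitutability clauses in Definitions \ref{def.5} and \ref{def.7} guarantee that the eigenvariable is absent from the conclusion, and the acyclicity of $<_{\varphi, \LK}$ lets me perform the substitutions in a reverse topological order, so that the resulting nested $\varepsilon$-terms are well-defined even when different inferences share an eigenvariable. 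The translated $\LE$-proof has size no larger than the original.

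After translation I would remove each remaining cut exactly as in the preceding theorem: the cut on $A$ is replaced by an $\to_l$ inference against the axiom $A \vdash A$, which leaves an extra $A \to A$ on the left; the $\varepsilon$-translation $[\forall \overline{x}(A^M \to A^M)]^\varepsilon$ is then derived immediately and contracted against the matching formula supplied by property 5 of Lemma \ref{lem.2}. Each such replacement costs polynomially many rules, so the resulting cut-free $\LE$-proof is polynomial in $i$, yielding properties 1--3. Property 4 is then immediate from property 2 of Lemma \ref{lem.2}, because the first-order image $[S_i]^{\forall\exists}$ of the end-sequent is precisely the sequent $S'_i$ guaranteed there.

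The most delicate point I expect to need care with is the lifting of Proposition \ref{prop.1} to $\LKpp$, where very weak regularity allows distinct inferences to share an eigenvariable provided their main formulas agree. This is not a genuine obstruction---inferences sharing a main formula also share an $\varepsilon$-witness, so the simultaneous substitution remains consistent---but it is the step that visibly departs from the $\LK$ case, and the one I would be careful to spell out explicitly rather than appeal to Proposition \ref{prop.1} as a black box.
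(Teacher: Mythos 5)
Your proposal matches the paper's own proof: choose the sequence from Lemma \ref{lem.2}, translate the with-cut proofs into $\LE$ via Proposition \ref{prop.1}, replace each cut on $A$ by an $\to_l$ inference yielding $A \to A$ on the left, derive $[\forall \overline{x}(A^M \to A^M)]^\varepsilon$ and contract it with the copy already present in the end-sequent, and read off the lower bound from property 2 of Lemma \ref{lem.2} (the paper secures this via the observation that cut-free $\LKp$- and $\LKpp$-proofs of weak-quantifier end-sequents are already $\LK$-proofs). Your additional care in checking that Proposition \ref{prop.1} lifts to $\LKp$/$\LKpp$ via acyclicity of the side-variable relation is a point the paper passes over silently, but it does not change the route.
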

\begin{proof}
We choose a sequence of $\LKp$- or $\LKpp$- proofs according to Lemma \ref{lem.2}.
We translate the proofs with cut into epsilon calculus (this does not lengthen the proof according to Proposition \ref{prop.1}).
In the $\LE$-proof we replace all cuts on $A$ by inferences of $A \to A$ on the left side.
Derive immediately $[\forall \overline{x} (A^M \to A^M)]^\epsilon$.
Contract it with the $\forall \overline{x} (A^M \to A^M)$ which is already in the end-sequent.
Note that cut-free $\LKp$- or $\LKpp$-proofs with end-sequents with weak quantifiers only are $\LK$-proofs.
\end{proof}

\section{Conclusion}
The effect that arbitrary cuts in $\LE$ can be transferred into universal cuts with linear increase of length demonstrates that no cut-elimination for $\LE$ by induction on the size of cut-formulas is possible. 
This implies that e.g. Gentzen-style cut-elimination and Sch\"utte-Tait-style cut-elimination are not feasible. Here the fundamental different nature of the (extended) first epsilon theorem becomes obvious \cite{hilbertbernays39}.

%

\bibliographystyle{plain}
\bibliography{refs}
\end{document}